\newtheorem{theorem}{Theorem}[section]
\newtheorem{lemma}{Lemma}[section]
\newtheorem{proposition}{Proposition}[section]
\newtheorem{corollary}{Corollary}[section]
\newtheorem{definition}{Definition}[section]
\newtheorem{remark}{Remark}[section]
\numberwithin{equation}{section}
\begin{document}
\title { \bf   The local properties  of the
 Markov processes of Ornstein-Uhlenbeck type \thanks{Supported by
    SRFDP (20060335032) and ZJNSFC (6100176) }}

\author{Jing Zheng$^{1 }$, \qquad Zhengyan Lin$^{2}$,
\qquad Changqing Tong$^{1}$\\
{}  {\small\it 1. Institute of Applied Mathematics, Hangzhou Dianzi
University, Hangzhou,  310018, China}
\\
  {\small\it 2. Department of Mathematics, Zhejiang University,
Hangzhou, 310027, China} }
  \date{}

\maketitle

\vskip3mm
{\noindent}-------------------------------------------------------------------------------------------------------

{\noindent\bf Abstract}\\

     We prove the existence of a local time, the continuity of
     the local time about $t$, and the regular property for $a.e.$ $x\in R$ of
 a  Ornstein-Uhlenbeck type $\{X_t,\ t\in R^+\}$
driven by a general L\'{e}vy process, under mild regularity
conditions. We discuss the asymptotic behaviour of the local time
when $X$ is ergodic. We also investigate the first passage problem.
These results give precise information about the local properties of
the sample functions.

\vskip3mm {   AMS 2000 subject classifications: 60F15, 60G15, 60G17.
} \vskip3mm {  \emph{Keywords:}}  local time, Markov process of
Ornstein-Uhlenbeck type,  L\'{e}vy process.

{\noindent}-------------------------------------------------------------------------------------------------------
  \vskip 10mm    \footnotetext[1] { E-mail address:
zhengjing@hdu.edu.cn (Jing Zheng)}

\section{Introduction}

 Let $(\Omega,\mathfrak{F},P)$ be a probability space, and
  $M_+(R^d)$ be the totality of real $d\times d$ matrices whose
all eigenvalues have positive real parts. The starting at $x$ Markov
process of Ornstein-Uhlenbeck (O-U) type   $X=\{X_t,t\in R^+,P_{x}\}
$ over $(\Omega,\mathfrak{F},P)$  is a Feller process
    with
infinitesimal generator
$$
A = G - \sum^d_{j=1}\sum^d_{k=1}Q_{jk}x_k\frac{\partial}{\partial
x_j},
$$
where $G$ is the infinitesimal generator of a L\'{e}vy process
$Z=\{Z_t, t\in R^+\}$ taking values in $R^d$,   $Q\in M_+(R^d)$ and
$x\in R^d$. An equivalent definition of this process $X$ is given by
the unique solution of the equation
$$
X_t= x - \int^t_0QX_sds + Z_t,
$$
which can be expressed as
$$
X_t = e^{-tQ}x + \int^t_0e^{(s-t)Q}dZ_s, \eqno {(1.1)}
$$
where the stochastic integral with respect to the L\'{e}vy process
$Z$ is defined by convergence in probability from integrals of
simple functions.  When $Z$ is the Brownian motion taking values in
$R^d$, $X$ is the ordinary O-U process.

The study of Markov processes of O-U type keeps receiving much
attention both in the physical and the mathematical literature, for
example, in climate models to explain    the so-called
Dansgaard-Oeschger events--see \cite{im:06} and the references
therein. Many authors investigated the recurrence \cite{Sh:85}
\cite{war:98}, the strong Feller property and the exponential
$\beta$-mixing property \cite{ma:04}. In \cite{lin:10}, the authors
used the local time as the kernel function of the empirical
likelihood inference, but as to the author's  knowledge, there is
not paper investigated the existence of the local time of $X$.

In this paper, we will prove the existence of a local time under
 mild regularity conditions, and  its continuous properties about time
 $t$.
  Here we consider the local time as the Radon-Nikodym derivative of
the occupation measure of $X$ relative to a Borel set $A$. There are
some different
 between this definition and the Blumenthal-Getoor local time. We will
 consider their connection.   Many authors like to
 consider $X$ on $R^d$, but when $d\geq 2$, single points are
 essentially polar even for Brownian motion, that is to say, $L(x,t)=0$ for $a.e. $ $x$. Hence,
 we define $X$ on $R$. It is well-known that $X$ is ergodic under very mild
 regularity condition, let $F$ be the unique invariant distribution
 of $X$, if there is a density $f$ of the distribution $F$, then
 $$
\lim_{\epsilon\rightarrow 0}\lim_{t\rightarrow
\infty}\frac{1}{2t\epsilon}\int_0^tI_{\{|X_s-x|<\epsilon\}}ds=f(x).
 $$
 Under some conditions, we have
 $$
 \lim_{t\rightarrow \infty}\frac{L(x,t)}{t}=f(x)\qquad   L^2(P),
 $$
that is to say, $ \frac{L(x,t)}{t}$ or $\
\frac{1}{2t\epsilon}\int_0^tI_{|X_s-x|<\epsilon}ds$ is a unbiased
consistent estimator of $f$, but we do not go any further in this
direction in this paper.

The paper is organized   as follows. we will give some basic results
about the L\'{e}vy process and $X$ in section 2.
  In Section 3, we establish
completely general criteria for the existence of the local time of
$X$ in terms of Fourier analysis following Berman \cite{berman:73}.
In section 4, we shall discuss the continuity and the asymptotic
behavior of the local time about time $t$. In section 5, we study
the first passage cross a lever.

 Throughout  this paper, $x$ is the starting point of the Markov process
  of O-U type.  $C$ always stands for a positive
constant, whose value is irrelevant. The expectation operator under
$P$ and $P_x$ are denoted  by $E$ and $E_x$. $\varphi_\eta(\cdot)$
stands for the characteristic function of a random variable or a
distribution $\eta$ and $\mathfrak{F_t}$ is the $P$-completed
sigma-field generated by $(X_s, s\leq t)$.

\section{Preliminaries}

 In this section, we   collect some  basic results about
the L\'{e}vy  process and the Markov process of O-U type which will
be used in the following section.

 Let $\{Z_t,\ t\in R^+\}$ be a L\'{e}vy process taking values in $R $, whose characteristic
 function is given by
 $$
 E(e^{i   \theta Z_t }) = \exp \{-t\psi(\theta)\},
 $$
 where
 $$
 \psi(\theta) =ib\theta+
 \frac{ \sigma^2\theta^2}{2} -
 \int_{R }(e^{i \theta u }-1-\frac{i\theta u}{1+|u|^2})\rho(du)\eqno{(2.1)}
$$
is called the L\'{e}vy exponent, $b\in R$, $\sigma \geq 0$   and
$\rho$ is a measure on $R $ satisfying that $\rho(\{0\})=0$ and the
integrability condition
$$
\int_{R^d} (1  \wedge |u|^2)\rho(du)< \infty.
$$

Certainly, the process $Z$ is characterized by the generating
triplet $(b,\sigma,\rho(\cdot))$.

Let $X=\{X_t,t\in R^+,P^{x}\} $ be a Markov process of O-U type
defined by    $(1.1)$. The next proposition specifies
 the characteristic function of the transition probability of $X$.

\begin{proposition}
 {\bf (Sato and Yamazato 1984, Theorem 3.1)}

 Let $P(t,x,\cdot)$ be the transition probability of $X$.  The characteristic function
 of $P(t,x,\cdot)$ is
 $$
 \varphi_{P(t,x,\cdot)}(\theta) = \exp \Big\{i x e^{-tQ }\theta -
 \int^t_0\psi(e^{  -sQ }\theta)ds\Big\}, \eqno{(2.2)}
 $$
where $\psi$ is given in $(2.1)$. In particular, the generating
triplet of $P(t,x,\cdot)$ is given by $(b_{t,x},\sigma_t,\rho_t)$,
where
\begin{eqnarray*}
b_{t,x}&=& e^{-tQ}x+\int_0^te^{-sQ} b ds+\int_R\int_0^te^{-sQ}z\{I_{\{|e^{-sQ}z|\leq 1\}}-I_{\{|z|\leq 1\}}\}ds\rho(dz), \\
 \sigma^2_t&=&\int_0^te^{-2sQ}\sigma^2ds,  \\
 \rho_t(E)&=&\int_0^t\rho(e^{sQ}E)ds.
\end{eqnarray*}
\end{proposition}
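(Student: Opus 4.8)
The plan is to compute the characteristic function $E_x\bigl[e^{i\langle\theta,X_t\rangle}\bigr]$ directly from the representation $(1.1)$, and then to extract the generating triplet by inserting the L\'evy--Khintchine form $(2.1)$ of $\psi$ into the resulting exponent. Split
$$
\langle\theta,X_t\rangle=\langle\theta,e^{-tQ}x\rangle+\Bigl\langle\theta,\int_0^te^{(s-t)Q}\,dZ_s\Bigr\rangle ;
$$
the deterministic first term contributes the factor $\exp\{ixe^{-tQ}\theta\}$ occurring in $(2.2)$, so everything comes down to the law of $Y_t:=\int_0^te^{(s-t)Q}\,dZ_s$.

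Since $f(s):=e^{(s-t)Q}$ is deterministic and continuous on $[0,t]$, I would approximate it by the left--endpoint step functions $f_n$ attached to partitions $0=s_0<\dots<s_n=t$ of vanishing mesh. Then $f_n\to f$ uniformly, so by the very definition of the stochastic integral in $(1.1)$ as a limit in probability of integrals of simple functions, $\int_0^tf_n\,dZ_s\to Y_t$ in probability, hence in distribution. For a simple integrand, $\int_0^tf_n\,dZ_s=\sum_jf_n(s_j)\bigl(Z_{s_{j+1}}-Z_{s_j}\bigr)$, and the independence and stationarity of the increments of $Z$ yield
$$
E\Bigl[e^{i\langle\theta,\int_0^tf_n\,dZ_s\rangle}\Bigr]=\prod_jE\Bigl[e^{i\langle f_n(s_j)\theta,\,Z_{s_{j+1}}-Z_{s_j}\rangle}\Bigr]=\exp\Bigl\{-\sum_j(s_{j+1}-s_j)\,\psi\bigl(f_n(s_j)\theta\bigr)\Bigr\}.
$$
As $s\mapsto\psi(f(s)\theta)$ is continuous, the Riemann sums on the right converge to $\int_0^t\psi(f(s)\theta)\,ds$, and since convergence in probability entails convergence of characteristic functions we get $E[e^{i\langle\theta,Y_t\rangle}]=\exp\{-\int_0^t\psi(e^{(s-t)Q}\theta)\,ds\}$. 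The substitution $s\mapsto t-s$ turns this into $\exp\{-\int_0^t\psi(e^{-sQ}\theta)\,ds\}$, and multiplying by the deterministic factor gives $(2.2)$. (In dimension one there is nothing to record about transposes of $Q$; in the $R^d$ formulation one simply carries $Q^{*}$ through the same steps.)

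For the triplet, note that $(2.2)$ already exhibits $P(t,x,\cdot)$ as an infinitely divisible law, so it admits a L\'evy--Khintchine representation and it is enough to bring $-\int_0^t\psi(e^{-sQ}\theta)\,ds$ into that canonical shape by substituting $(2.1)$ and applying Fubini. The Gaussian part immediately gives $\tfrac12\langle\theta,(\int_0^te^{-2sQ}\sigma^2\,ds)\,\theta\rangle$, i.e. $\sigma_t^2$. For the jump part the decisive move is the change of variables $z=e^{-sQ}u$ in the inner $\rho$--integral: it sends $e^{i\langle\theta,e^{-sQ}u\rangle}-1$ to $e^{i\langle\theta,z\rangle}-1$ and pushes $\rho$ forward to $E\mapsto\rho(e^{sQ}E)$, so that after the time integration the L\'evy measure of $P(t,x,\cdot)$ is $\rho_t(E)=\int_0^t\rho(e^{sQ}E)\,ds$. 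One verifies that $\rho_t$ is a genuine L\'evy measure: undoing the substitution, $\int(1\wedge|z|^2)\,\rho_t(dz)=\int_0^t\int(1\wedge|e^{-sQ}u|^2)\,\rho(du)\,ds<\infty$, since $Q\in M_+(R^d)$ forces $\sup_{s\geq0}\|e^{-sQ}\|\leq C$, whence $1\wedge|e^{-sQ}u|^2\leq C\,(1\wedge|u|^2)$. Finally, the terms linear in $\theta$ --- the drift part of $(2.1)$, the term $e^{-tQ}x$ from the deterministic factor, and the leftover coming from rewriting the centering $(1+|u|^2)^{-1}$ of $(2.1)$ in the indicator form $I_{\{|\cdot|\leq1\}}$ that the stated triplet uses, applied both to the original jump $u$ and to the transformed jump $e^{-sQ}u$ --- combine to give exactly $b_{t,x}=e^{-tQ}x+\int_0^te^{-sQ}b\,ds+\int_R\int_0^te^{-sQ}z\{I_{\{|e^{-sQ}z|\leq1\}}-I_{\{|z|\leq1\}}\}\,ds\,\rho(dz)$.

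The genuinely delicate steps are two. The first is the interchange of limit and integral in the computation of the characteristic function of $Y_t$: since $Y_t$ is only available as an $L^0$--limit, one must invoke that convergence in probability yields convergence of characteristic functions and that $\psi$ is continuous (hence bounded on the compact set $\{f(s)\theta:s\in[0,t]\}$) in order to pass to the Riemann integral; this is standard but deserves to be spelled out. The second, and the only place where the computation is not purely mechanical, is the bookkeeping of the truncation functions in the jump part: the naive change of variables does not land on the stated $b_{t,x}$ unless one carefully accounts for the mismatch between the $(1+|u|^2)^{-1}$--centering in $(2.1)$ and the indicator centering of the generating--triplet convention, applied on both sides of the linear map $e^{-sQ}$ --- this is precisely what produces the third, slightly unusual, summand of $b_{t,x}$. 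The remaining drift and Gaussian computations are routine applications of Fubini's theorem.
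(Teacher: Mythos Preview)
The paper does not give its own proof of this proposition; it is quoted verbatim from Sato and Yamazato (1984, Theorem~3.1) and used as a black box in the sequel. There is therefore nothing in the paper to compare your argument against.

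That said, your argument is the standard one and is correct. Approximating the deterministic integrand $s\mapsto e^{(s-t)Q}$ by step functions, using the independent stationary increments of $Z$ to factor the characteristic function of the Riemann sums, and passing to the limit via continuity of $\psi$ and convergence in probability of the stochastic integrals is exactly how Sato and Yamazato proceed, and how this formula is routinely derived. Your identification of the triplet by substituting $(2.1)$ into the exponent and changing variables $z=e^{-sQ}u$ is likewise the standard computation.

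One small remark on the point you flag about truncation conventions: the paper's $(2.1)$ is written with the centering $i\theta u/(1+|u|^2)$, whereas the stated triplet $(b_{t,x},\sigma_t,\rho_t)$ is visibly written in the indicator convention $I_{\{|\cdot|\le 1\}}$. The third summand in $b_{t,x}$ is exactly the correction one obtains when \emph{both} $\psi$ and the target representation use indicator centering; if one takes $(2.1)$ literally with the $(1+|u|^2)^{-1}$ centering, the compensation term has a slightly different shape. This is a harmless inconsistency in the paper's conventions (Sato--Yamazato themselves use indicator centering), and you are right to note that the only nonmechanical step in the triplet identification is precisely this truncation bookkeeping.
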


Now assume that
$$
\int_{|z|>1}\log|z|\rho(dz)<\infty,\eqno{(2.4)}
$$
or, equivalently, $E[\log\{max(1,|Z_1|)\}]<\infty$.

\begin{proposition}{\bf (Sato and Yamazato 1984, Theorem 4.1 and
4.2)}

 (a) If  (2.4) holds, there exists a limit distribution $F$ such that
 $$P(t,x,A)\rightarrow F(A), \quad \mbox{as}\quad t\rightarrow \infty$$
for any $x\in R$ and $A\in\mathfrak{B}(R)$. This $F$ is the unique
invariant distribution of $X$. Moreover, the characteristic function
of $F$ is given by
 $$
 \varphi_{F}(\theta) = \exp \Big\{
 \int^\infty_0\psi(e^{  -sQ }\theta)ds\Big\},
 $$
where $\psi$ is given in $(2.1)$. In particular, the generating
triplet of $P(t,x,\cdot)$ is given by
$(b_{\infty},\sigma_\infty,\rho_\infty)$, where
\begin{eqnarray*}
b_{\infty}&=&   Q^{-1} b+\int_R\int_0^\infty e^{-sQ}z\{I_{\{|e^{-sQ}z|\leq 1\}}-I_{\{|z|\leq 1\}}\}ds\rho(dz) \\
 \sigma^2_\infty&=&\int_0^\infty e^{-2sQ}\sigma^2ds  \\
 \rho_\infty (E)&=&\int_0^\infty\rho(e^{sQ}E)ds, \quad E\in
 \mathfrak{B}(R).
\end{eqnarray*}

(b) If  (2.4) fails to hold, then $X$ has no invariant distribution,
and moreover, for any $x\in R$, $P(t,x,\cdot)$ does not converge to
any probability measure as $t\rightarrow \infty$.
\end{proposition}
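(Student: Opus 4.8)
The whole argument is driven by the explicit formula (2.2) of Proposition 2.1 together with L\'evy's continuity theorem; the real question is when the exponent $\int_0^t\psi(e^{-sQ}\theta)\,ds$ has a finite limit as $t\to\infty$. Since every eigenvalue of $Q$ has positive real part there are constants $C\ge 1$, $c>0$ with $\|e^{-sQ}\|\le Ce^{-cs}$ for $s\ge 0$, so the term $e^{-tQ}x$ in (2.2) disappears in the limit and only the integral matters.

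For part (a) I would first prove $\int_0^\infty|\psi(e^{-sQ}\theta)|\,ds<\infty$. Writing $\theta_s=e^{-sQ}\theta$ and decomposing $\psi$ as in (2.1), the drift, Gaussian and $\{|u|\le1\}$-jump contributions are $O(|\theta_s|)+O(|\theta_s|^2)$, hence integrable in $s$ by the exponential decay of $\theta_s$ (the small-jump part using $\int_{|u|\le1}u^2\rho(du)<\infty$), while for the large jumps $|e^{i\langle\theta_s,u\rangle}-1-i\langle\theta_s,u\rangle(1+|u|^2)^{-1}|\le C\min(1,|\theta_s|\,|u|)$ and $\int_0^\infty\min(1,Ce^{-cs}|\theta|\,|u|)\,ds\le C'(1+\log^+|u|)$, which is $\rho$-integrable \emph{precisely} by (2.4). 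Consequently $\varphi_{P(t,x,\cdot)}(\theta)\to\exp\{-\int_0^\infty\psi(e^{-sQ}\theta)\,ds\}=:\varphi_F(\theta)$ for every $\theta$, and the same bound, locally uniform in $\theta$, makes $\varphi_F$ continuous at $0$; being a pointwise limit of infinitely divisible characteristic functions, $\varphi_F$ is then the characteristic function of a probability measure $F$ (necessarily infinitely divisible) and $P(t,x,\cdot)\Rightarrow F$. I would read off the triplet of $F$ by letting $t\to\infty$ in the triplet from Proposition 2.1 (or, equivalently, by integrating (2.1) term by term): $\int_0^te^{-sQ}b\,ds\to Q^{-1}b$, $\sigma_t^2\uparrow\sigma_\infty^2<\infty$, $\rho_t(E)\uparrow\rho_\infty(E)$, and the jump correction in $b_{t,x}$ converges by the same $\log^+$ estimate; one then checks that $\rho_\infty$ satisfies the L\'evy integrability condition, the only nontrivial point being that $\rho_\infty(\{|u|>1\})=\int_0^\infty\rho(\{|e^{-sQ}v|>1\})\,ds<\infty$, which a Fubini computation identifies with $\int_{|v|>1}\log|v|\,\rho(dv)<\infty$, i.e. with (2.4). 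Finally, $F$ is invariant because the convolution structure of (1.1) gives $\varphi_{FP(t)}(\theta)=\varphi_F(e^{-tQ}\theta)\exp\{-\int_0^t\psi(e^{-sQ}\theta)\,ds\}$ while $\varphi_F(e^{-tQ}\theta)=\exp\{-\int_t^\infty\psi(e^{-rQ}\theta)\,dr\}$, so the product is again $\varphi_F(\theta)$; and $F$ is the \emph{unique} invariant law, since any invariant $\mu$ obeys $\mu=\mu P(t)\Rightarrow F$, forcing $\mu=F$.

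For part (b), assume (2.4) fails; I would show that $R(\theta):=\int_0^\infty\operatorname{Re}\psi(e^{-sQ}\theta)\,ds=+\infty$ for every $\theta\ne0$. Here $\operatorname{Re}\psi(\eta)=\tfrac{\sigma^2|\eta|^2}{2}+\int(1-\cos\langle\eta,u\rangle)\rho(du)\ge0$, so the integral is monotone and only a lower bound is needed; writing $Q=q>0$ (the scalar case of this paper), the substitution $r=e^{-sq}\theta$ turns it into $R(\theta)=\tfrac1q\int_0^{|\theta|}\operatorname{Re}\psi(r)\,r^{-1}\,dr$, and a Fubini computation together with $\int_0^{a}(1-\cos v)\,v^{-1}\,dv\ge c\log^+a$ bounds $R(\theta)$ below by a positive multiple of $\int_{|u|>1}\log|u|\,\rho(du)$, which diverges. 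Given this, if $P(t_n,x,\cdot)\Rightarrow\mu$ along some $t_n\to\infty$ then $|\varphi_\mu(\theta)|^2=\lim_n\exp\{-2\int_0^{t_n}\operatorname{Re}\psi(e^{-sQ}\theta)\,ds\}=e^{-2R(\theta)}=0$ for all $\theta\ne0$, contradicting continuity of $\varphi_\mu$ at $0$ together with $\varphi_\mu(0)=1$; and if $\mu$ were invariant the same convolution identity gives $|\varphi_\mu(\theta)|=|\varphi_\mu(e^{-tQ}\theta)|\,e^{-\int_0^t\operatorname{Re}\psi(e^{-sQ}\theta)\,ds}\to1\cdot e^{-R(\theta)}=0$, again impossible. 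This yields both the non-existence of an invariant law and the non-convergence of $P(t,x,\cdot)$.

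\textbf{The main difficulty.} Everything hinges on the two-sided estimate $\int_0^\infty|g(\langle e^{-sQ}\theta,u\rangle)|\,ds\asymp 1+\log^+|u|$ (with $g$ the L\'evy--Khintchine integrand $e^{i\eta}-1-\cdots$ for the upper bound used in (a), and $g=1-\cos$ for the lower bound used in (b)): the upper half makes the exponent absolutely convergent exactly when the logarithmic moment (2.4) is finite, and the lower half makes it diverge exactly when (2.4) fails, which is why (2.4) is the sharp dividing line. The genuinely technical point, were one to run the argument with $Q$ a matrix, is to keep the lower bound robust, since $\langle e^{-sQ}\theta,u\rangle$ can stay small even when $\|e^{-sQ}\theta\|\,|u|$ is large, forcing one to integrate along the orbit $s\mapsto e^{-sQ}\theta$ or over $\theta$; on $R$ the substitution above makes it transparent. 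The rest is bookkeeping with L\'evy--Khintchine triplets and the continuity theorem.
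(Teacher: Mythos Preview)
The paper does not supply a proof of this proposition at all: it is quoted verbatim as Theorems 4.1 and 4.2 of Sato and Yamazato (1984) and used as background input for the rest of the paper. There is therefore nothing in the paper to compare your argument against.

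For what it is worth, your sketch is the standard route and is correct in the one-dimensional setting the paper works in. The identification of condition (2.4) with absolute convergence of $\int_0^\infty\psi(e^{-sQ}\theta)\,ds$ via the estimate $\int_0^\infty\min(1,Ce^{-cs}|\theta||u|)\,ds\le C'(1+\log^+|u|)$ is exactly the mechanism behind Sato--Yamazato's Theorem 4.1, and your handling of part (b) through the substitution $r=e^{-sq}\theta$ together with the lower bound $\int_0^a(1-\cos v)v^{-1}\,dv\ge c\log^+ a$ is the clean scalar version of their Theorem 4.2. Your closing remark that the lower bound in the genuine matrix case is the delicate point is also accurate, but since the present paper restricts to $d=1$ this plays no role here.
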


According to Proposition 2.2, under condition (2.4), $X$ is ergodic.
We shall use this in the section 4.

To begin, we introduce some definitions following \cite{Bertoin:96}.
 \vskip3mm
\begin{definition} {\bf (Occupation measure)}\qquad For every
$t>0$,
 the occupation measure on the time $[0,t]$ is the measure $\mu_t$
 given for every measurable function $f: R\rightarrow [0,\infty)$ by
 $$
 \int_Rf(x)\mu_t(dx)=\int_0^t f(X_s)ds.
 $$
\end{definition}

 When the occupation measure is absolutely continuous, Lebesque's
 differentiation theorem enables us to define a particular version
 of the density of the occupation measure, called the local time.

 \begin{definition}{\bf (Local time)}\qquad  For every $t\geq o$ and
 $x\in R$, the quantity
 $$
\limsup\frac{1}{2\epsilon}\int^t_0I_{\{|X_s-x|<\epsilon\}}ds
$$
denoted by $L(x,t)$ and called the local time at level $x$ and time
$t$.
\end{definition}

The local time is defined at last in three different ways, namely
via stochastic calculus, via excursion theory, and via additive
functions. Definition 2.2 is the first approach. the
Blumenthal-Getoor local time is defined as the unique continuous
additive function supported by a single point $x$, and $L(x,t)$
exist if and only if $x$ is a regular point. See \cite{geman:80} and
\cite{Kallenberg:2001}

\section{The local time of the Markov  process of O-U type  }

In this section, we will  give a completely general criterion for
the existence of local time as a density of occupation measure. The
proof is based on Fourier analysis approach due to S. M. Berman
\cite{berman:69}, \cite{berman:73}, See also \cite{Bertoin:96}.

 At first, we calculate Fourier transform of
$X_s-X_t $ for $0<t<s$.
  For every $\theta\in R $, by the Markov property,   the
  time-homogeneous and (2.2),
\begin{eqnarray}
& &\varphi_{X_s-X_t}(\theta)= Ee^{i<\theta, X_s-X_t>}
 = E\ ( E[e^{i \theta( X_s-X_t) }\big|\mathfrak{F_t} ])\nonumber\\
& =& E\Big[ e^{-i \theta X_t}E_{X_t }e^{i \theta(X_{s-t})}\Big]\nonumber\\
& =& Ee^{-i \theta X_t }\exp\Big\{i X_te^{-(s-t)Q}\theta  -
\int^{s-t}_0\psi(e^{-uQ}\theta)du\Big\}\nonumber\\
& =& Ee^{iX_t \theta (e^{-(s-t)Q}-1)}\exp\Big\{-
\int^{s-t}_0\psi(e^{-uQ}\theta)du\Big\}\nonumber\\
& =&
\exp\Big\{ixe^{-tQ}(e^{-(s-t)Q }-1)\theta -\int^t_0\psi(e^{-uQ }\theta(e^{-(s-t)Q }-1))du\nonumber\\
  & & \hspace{5cm} -\int^{s-t}_0\psi(e^{-uQ }\theta)du\Big\}.\label{4.1}
\end{eqnarray}

 Hence, we have
\begin{eqnarray}
|\varphi_{X_s-X_t}(\theta)|&=  & \exp\Big\{
-\int^t_0\mathfrak{Re}\psi(e^{-uQ }\theta(e^{-(s-t)Q }-1))du
-\int^{s-t}_0\mathfrak{Re}\psi(e^{-uQ }\theta)du\Big\}\nonumber\\
&\leq& \exp\Big\{-\int^{s-t}_0\mathfrak{Re}\psi(e^{-uQ
}\theta)du\Big\},\label{3.2}
\end{eqnarray}
where $\mathfrak{Re}\psi$ is the real part of $\psi$.

\begin{theorem}\quad Let $X$ be defined by (1.1) and $\psi$
is the characteristic exponent of
 $Z$. Suppose that either of the following conditions holds true for
 each $t\in R^+$

 (a)   $\sigma>0$.

 (b) There exist constants $\alpha\in(0,2)$ and $c>0$ such that
  \begin{eqnarray}
 \int_{\{z:|vz|\leq 1\}}|vz|^2\rho(dz)\geq c|v|^{2-\alpha}\label{1}
\end{eqnarray}
for any $v\in R^d$ satisfying $|v|\geq 1$. Then the local time exist
in $L^2( dp)$ a.e.
\end{theorem}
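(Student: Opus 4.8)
The plan is to use the classical Fourier-analytic criterion of Berman: the occupation measure $\mu_t$ is absolutely continuous with an $L^2$ density for a.e.\ sample path provided that
\[
\int_R \int_0^t\!\!\int_0^t |\varphi_{X_s-X_r}(\theta)|\,ds\,dr\,d\theta < \infty .
\]
Indeed, if this holds then $E\int_R|\widehat{\mu_t}(\theta)|^2\,d\theta<\infty$ by Plancherel/Fubini, where $\widehat{\mu_t}(\theta)=\int_0^t e^{i\theta X_s}\,ds$, so $\mu_t$ has an $L^2(dx)$ density $L(\cdot,t)$ almost surely, for each fixed $t$; this density is then a version of the local time of Definition 2.2. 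So the whole theorem reduces to the integrability of the double time integral of $|\varphi_{X_s-X_r}(\theta)|$ against $d\theta$, and by symmetry in $r,s$ it suffices to bound $\int_R\int_0^t\int_0^s |\varphi_{X_s-X_r}(\theta)|\,dr\,ds\,d\theta$.

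The key estimate is already prepared in \eqref{3.2}: for $0<r<s$,
\[
|\varphi_{X_s-X_r}(\theta)| \le \exp\Big\{-\int_0^{s-r}\mathfrak{Re}\,\psi(e^{-uQ}\theta)\,du\Big\}.
\]
So I would substitute $h=s-r$ and reduce to controlling
\[
\int_R \int_0^t \exp\Big\{-\int_0^{h}\mathfrak{Re}\,\psi(e^{-uQ}\theta)\,du\Big\}\,dh\,d\theta ,
\]
i.e.\ I need a lower bound on $\mathfrak{Re}\,\psi$ that makes the inner exponential integrable in $\theta$. From \eqref{2.1},
\[
\mathfrak{Re}\,\psi(\theta)=\frac{\sigma^2\theta^2}{2}+\int_R\bigl(1-\cos(\theta u)\bigr)\rho(du).
\]
Under hypothesis (a), $\mathfrak{Re}\,\psi(e^{-uQ}\theta)\ge \tfrac{\sigma^2}{2}|e^{-uQ}\theta|^2\ge \tfrac{\sigma^2}{2}c_0 e^{-2\|Q\|u}|\theta|^2$ for a constant $c_0>0$ depending on $Q$, so $\int_0^h\mathfrak{Re}\,\psi(e^{-uQ}\theta)\,du\ge C(h)|\theta|^2$ with $C(h)>0$ for $h>0$; the Gaussian tail gives $\int_R e^{-C(h)|\theta|^2}\,d\theta \le C h^{-1/2}$ (for small $h$), which is integrable in $h$ on $[0,t]$. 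Under hypothesis (b), I would use the elementary bound $1-\cos x\ge c x^2$ for $|x|\le 1$ to get, for $|e^{-uQ}\theta|\ge 1$,
\[
\mathfrak{Re}\,\psi(e^{-uQ}\theta)\ \ge\ \int_{\{z:|e^{-uQ}\theta z|\le 1\}} c\,|e^{-uQ}\theta z|^2\,\rho(dz)\ \ge\ c'\,|e^{-uQ}\theta|^{2-\alpha}\ \ge\ c''\,e^{-(2-\alpha)\|Q\|u}\,|\theta|^{2-\alpha},
\]
using \eqref{1} with $v=e^{-uQ}\theta$. Integrating in $u$ over $[0,h]$ gives a lower bound of the form $C(h)|\theta|^{2-\alpha}$ for $|\theta|$ large, and since $2-\alpha>0$ the function $e^{-C(h)|\theta|^{2-\alpha}}$ is integrable in $\theta$ over $R$, with a bound that is integrable in $h\in[0,t]$ (near $h=0$ one controls things because $C(h)\sim c\,h$, and $\int_0^t\int_R e^{-ch|\theta|^{2-\alpha}}d\theta\,dh<\infty$). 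One must be slightly careful about the region $|e^{-uQ}\theta|<1$, but there the integrand is bounded by $1$ and contributes a finite amount.

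The main obstacle is the second step, i.e.\ extracting from the L\'evy-measure hypothesis \eqref{1} a genuine polynomial lower bound $\mathfrak{Re}\,\psi(\theta)\ge c|\theta|^{2-\alpha}$ valid for all large $|\theta|$ and then tracking how the factor $e^{-uQ}$ degrades this as $u$ ranges over $[0,h]$ — one needs the resulting $h$-dependent constant $C(h)$ to vanish slowly enough (linearly in $h$) that $\int_0^t C(h)^{-d/(2-\alpha)}\,dh<\infty$ in dimension $d=1$; since $d=1$ and $2-\alpha>0$ this is $\int_0^t h^{-1/(2-\alpha)}\,dh$, which is finite precisely because $1/(2-\alpha)<1$ for $\alpha\in(0,2)$. (Here I am using that $X$ is defined on $R$, i.e.\ $d=1$; this is exactly where the restriction to one dimension, flagged in the introduction, enters.) Once this lower bound and its $h$-integrability are established, the theorem follows by Fubini and Plancherel as indicated above, and the "a.e.\ $x$" and "$L^2(dP)$" statements are immediate from the $L^2(dx)$-density conclusion.
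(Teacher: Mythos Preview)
Your overall strategy coincides with the paper's: apply Berman's $L^2$ Fourier criterion via the bound $|\varphi_{X_s-X_r}(\theta)|\le\exp\{-\int_0^{s-r}\mathfrak{Re}\,\psi(e^{-uQ}\theta)\,du\}$, and then bound $\mathfrak{Re}\,\psi$ from below using either the Gaussian part (case~(a)) or the inequality $1-\cos x\ge cx^2$ together with the L\'evy-measure hypothesis (case~(b)). The one structural difference is that you work directly with $\mu_t$ on $[0,t]$, whereas the paper passes through an exponentially weighted occupation measure $\mu$ on $[0,\infty)$ (weight $e^{-2Qs}$) and then observes $\mu_t\ll\mu$. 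The paper needs that weight so that the region $\{|\theta|\le e^{sQ}\}$, where the polynomial lower bound on $\mathfrak{Re}\,\psi$ is unavailable, still contributes a finite amount after integrating $s$ to infinity (this is exactly the point of Remark~3.1); in your finite-time setup $s\le t$ and that region is automatically bounded, so your variant is marginally cleaner here.

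There is, however, a concrete error in your treatment of case~(b). You assert that $\int_0^t h^{-1/(2-\alpha)}\,dh<\infty$ ``precisely because $1/(2-\alpha)<1$ for $\alpha\in(0,2)$'', but this inequality holds only for $\alpha\in(0,1)$: when $\alpha\in[1,2)$ one has $1/(2-\alpha)\ge 1$ and the integral diverges. Thus, as written, your argument establishes the result only for $\alpha<1$. (This restriction is not unnatural: if $Z$ is symmetric $\beta$-stable then the hypothesis holds with $\alpha=2-\beta$, and $\alpha<1$ is exactly $\beta>1$, the range in which the driving L\'evy process itself admits local times.) The paper's own computation at the corresponding step is extremely terse---it jumps directly to a bound $C\,\Gamma\!\big(\tfrac{1}{2-\alpha}\big)$ without displaying the $h$-integration---and on inspection runs into the same small-$h$ singularity, so you should not expect to find a repair there; either the statement really wants $\alpha\in(0,1)$, or a genuinely different estimate is needed for $\alpha\ge 1$.
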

\begin{proof}\quad Introduce the measure $\mu$ by
\begin{eqnarray}
\int_R f(x)\mu(dx)=\int_0^\infty e^{-2Qs}f(X_s)ds=\int_0^\infty
dte^{-2Qt}\int_R f(x)\mu_t(dx),\label{3.3}
\end{eqnarray}
the occupation measure $\mu_t$ is absolutely continuous with respect
to $\mu$ with density bounded from above by $e^t$. Now, by Fubini's
theorem and Plancherel's theorem, what we have to check is
\begin{eqnarray}
\int^\infty_{-\infty}E(|\mathfrak{F}\mu(\theta)|^2)d\theta<\infty,
\label{4.2}
\end{eqnarray}
where $\mathfrak{F}\mu(\theta)$ denotes  the Fourier transform  of
$\mu$.

Noted that $E(|\mathfrak{F}\mu(\theta)|^2)$ is a non-negative real
function, from  the definition of $\mu$ (\ref{3.3}), (\ref{3.2}),
and Fubini's theorem,
\begin{eqnarray}
E(|\mathfrak{F}\mu(\theta)|^2)&=&E[\mathfrak{F}\mu(\theta)\mathfrak{F}\mu(-\theta)]\nonumber\\
&=&E[(\int_0^\infty e^{-2Qs}\exp\{i\theta X_s\}ds)(\int_0^\infty
e^{-2Qt}\exp\{-i\theta X_t\})]\nonumber\\
&=&E(\int_0^\infty  \int_0^\infty
 e^{-2Q(s+t)}\exp\{i\theta(X_s-X_t)\}dtds)\nonumber\\
&\leq& \int_0^\infty  \int_0^\infty
 e^{-2Q(s+t)}\exp\Big\{-\int^{s-t}_0\mathfrak{Re}\psi(e^{-uQ
}\theta)du\Big\} dtds).
 \label{3}
\end{eqnarray}

When $\sigma>0$, by (\ref{3.2}) and the definition of $\psi$,
\begin{eqnarray}
 \exp\Big\{-\int^{s-t}_0\mathfrak{Re}\psi(e^{-uQ
}\theta)du\Big\}
 \leq\exp\{-\frac{1}{2}(\theta\sigma)^2
\int^{s-t}_0e^{-2uQ }du\},\label{4}
\end{eqnarray}
By (\ref{3}), (\ref{4}) and Fubini's theorem,
\begin{eqnarray}
\int_{R}E(|\mathfrak{F}\mu(\theta)|^2)d\theta
&\leq&2\int_Rd\theta\int_0^\infty dte^{-2Qt}\int_t^\infty
dse^{-2Qs}\exp\{-\frac{(\theta\sigma)^2}{2 }
\int^{s-t}_0e^{-2uQ }du\}\nonumber\\
&=&2\int_Rd\theta\int_0^\infty dte^{-2Qt}\int_t^\infty
dse^{-2Qs}\exp
\{-\frac{(\sigma\theta)^2}{4Q} (1-e^{-2(s-t)Q})\}d\theta \nonumber\\
&=&2\int_0^\infty dte^{-2Qt}\int_t^\infty
dse^{-2Qs}\int_R\exp\{-\frac{(\sigma\theta)^2}{4Q} (1-e^{-2(s-t)Q})\}\nonumber\\
&=&C\int_0^\infty dte^{-2Qt}\int_t^\infty
 e^{-2Qs}  (1-e^{-2(s-t)Q})^{-\frac{1}{2}}ds\nonumber\\
&\leq&C\Gamma(\frac{1}{2})<\infty.\label{5}
\end{eqnarray}
So that, whenever $\sigma>0$, assertion (a) follows (\ref{5}).

Turing to (b), by (\ref{3.2}),
\begin{eqnarray}
 \exp\Big\{-\int^{s-t}_0\mathfrak{Re}\psi(e^{-uQ
}\theta)du\Big\}
 &\leq& \exp\Big\{-\int^{s-t}_0\int_R[1-\cos(e^{-uQ }\theta z)]\rho
(dz)du\Big\}.\label{6}
\end{eqnarray}
Let
 $$
J(\theta)=\exp\Big\{-\int^{s-t}_0\int_R[1-\cos(e^{-uQ }\theta
z)]\rho (dz)du\Big\}
$$

Using the inequality $1-\cos \geq 2(x/\pi)^2 $ for $|x|\leq \pi$ and
assumption (\ref{1}), when $e^{-sQ}\theta\geq 1$,
\begin{eqnarray}
 J(\theta)
 \leq  \exp\{-C
\int^{s-t}_0|e^{- uQ }\theta|^{2-\alpha}du\}.\label{7}
\end{eqnarray}
 By (\ref{3}) and (\ref{7}), we have
\begin{eqnarray}
&&\int_{R}E(|\mathfrak{F}\mu(\theta)|^2)d\theta\nonumber\\
&\leq&2\int_R\int_0^\infty
 \int_t^\infty J(\theta)e^{-2(s+t)Q}dsdtd\theta\nonumber\\
&=& 4 \int_0^\infty  \int_t^\infty\int^\infty_{e^{sQ}}
J(\theta)e^{-2(s+t)Q} d\theta dsdt+4\int_0^\infty
 \int_t^\infty\int_0^{e^{sQ}}
J(\theta)e^{-2(s+t)Q}dsd\theta dsdt\nonumber\\
&\leq&   4 \int_0^\infty  \int_t^\infty\int^\infty_{e^{sQ}} \exp\{-C
\int^{s-t}_0|e^{- uQ }\theta|^{2-\alpha}du\}e^{-2(s+t)Q} d\theta
dsdt
\nonumber\\
&&\mspace{100mu} +4\int_0^\infty
dte^{-2Qt}\int_t^\infty\int_0^{e^{sQ}}
J(\theta)e^{-2(s+t)Q}d\theta dsdt\nonumber\\
&\leq& C\Gamma(\frac{1}{2-\alpha})+4\int_0^\infty
 \int_t^\infty\int_0^{e^{sQ}}  e^{-2(s+t)Q} d\theta dsdt\label{3.11}\\
&\leq& \infty.\nonumber
\end{eqnarray}
The proof is complete.
\end{proof}

\begin{remark}\qquad The local time could be expressed at the "sum
of times spent at $x$ up to time $t$". To avoid fixed $t$, $\mu$ is
defined. For the $2Q$ in $e^{-2Q}$ of $\mu$, it is used in
(\ref{3.11}).
\end{remark}
\begin{remark}\qquad  As above, Theorem 3.1 is not true $a.s.$ at every $x$.
From the point of view of occupation densities, such aberrant
behavior at a single state is irrelevant. the local time as
occupation density is different from the Blumenthal-Getoor local
time, as a continuous additive function of some point.  For example,
when $Z$ is a Poisson process, the start point is 0, then there is
the  Blumenthal-Getoor local time at 0 about $X$. In fact, 0 is a
holding point, so a regular point. But there do not exit a
occupation density about $X$.
\end{remark}

\begin{remark}\qquad Meyer \cite{me:75} has proved:
 let the process $Y=(Y_t)$, adapted to the natural $\sigma-$fields
 of a Brownian motion $W=(W_t)$, have trajectories of bounded
 variation; then there exit an occupation density of $W_t+Y_t$. Theorem 3.1
 asserts that there are local time when $W$ is a general O-U
 process, and $Y$ is O-U type of pure jump.
\end{remark}

\section{Some properties of local time}

In this section, we shall obtain the smoothness of the local time in
the time variable,  when the level has been fixed. At the end of the
section, we shall discuss the limit property of the local time at
$t\rightarrow \infty$ when $X$ is ergodic.

We assume that the conditions of Theorem 3.1 is satisfied in this
section. Lebesgue's differentiation theorem enables us to define a
particular version of the occupation density, called the local time
as
$$
\limsup\frac{1}{2\epsilon}\int^t_0I_{\{|X_s-x|<\epsilon\}}ds
$$
for every $t\leq0$ and $x\in R$. Now, we can replace "$\limsup$" by
"$\lim$" in the definition of local time. Before proving it, we have
some lemmas. The following Lemma come from Masuda \cite{ma:04}.

 \begin{lemma}\quad The following statements hold true  for each $t\in
 R_+$.

  (a) If $\sigma>0$, then $P(t,x,\cdot)$ admits a $C_b^\infty$
  density.

  (b) If there exist constants $\alpha\in (0,2)$ and $c>0$ such that
  (\ref{1}) satisfy, then $P(t,x,\cdot)$ admits a $C_b^\infty$
  density.
\end{lemma}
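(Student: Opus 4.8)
The plan is to reduce both assertions to a decay estimate on the characteristic function of $P(t,x,\cdot)$ and then apply Fourier inversion. Recall the standard fact: if a probability measure $\nu$ on $R$ has characteristic function $\varphi_\nu$ with $\int_R|\theta|^n|\varphi_\nu(\theta)|\,d\theta<\infty$ for every integer $n\geq0$, then $\nu$ is absolutely continuous with density $p(y)=\frac{1}{2\pi}\int_R e^{-iy\theta}\varphi_\nu(\theta)\,d\theta$; differentiating $n$ times under the integral sign is legitimate because the differentiated integrand is dominated by $|\theta|^n|\varphi_\nu(\theta)|\in L^1$, so $p\in C^\infty$ with $\|p^{(n)}\|_\infty\leq\frac{1}{2\pi}\int_R|\theta|^n|\varphi_\nu(\theta)|\,d\theta$, and continuity of each $p^{(n)}$ follows from dominated convergence. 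Hence it suffices to check, for each fixed $t>0$, that $\int_R|\theta|^n|\varphi_{P(t,x,\cdot)}(\theta)|\,d\theta<\infty$ for all $n$.

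From (2.2), and using that on $R$ the matrix $Q$ is a positive scalar, the same computation that produced (\ref{3.2}) gives $|\varphi_{P(t,x,\cdot)}(\theta)|=\exp\{-\int_0^t\mathfrak{Re}\,\psi(e^{-sQ}\theta)\,ds\}$, where by (2.1) $\mathfrak{Re}\,\psi(\eta)=\frac{\sigma^2\eta^2}{2}+\int_R(1-\cos(\eta u))\rho(du)\geq0$. For part (a) I would simply discard the non-negative jump contribution, so $\mathfrak{Re}\,\psi(\eta)\geq\frac{\sigma^2}{2}\eta^2$ and
$$|\varphi_{P(t,x,\cdot)}(\theta)|\leq\exp\Big\{-\frac{\sigma^2}{2}\theta^2\int_0^te^{-2sQ}\,ds\Big\}=\exp\{-c_t\theta^2\},\qquad c_t=\frac{\sigma^2(1-e^{-2tQ})}{4Q}>0 .$$
This Gaussian decay makes $\int_R|\theta|^ne^{-c_t\theta^2}\,d\theta<\infty$ for every $n$, proving (a).

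For part (b) I would keep only the jump part, use the elementary inequality $1-\cos x\geq2x^2/\pi^2$ for $|x|\leq\pi$, and feed in hypothesis (\ref{1}) with the choice $v=\eta/\pi$: for $|\eta|\geq\pi$ (so that $|v|\geq1$),
$$\mathfrak{Re}\,\psi(\eta)\geq\int_{\{|\eta z|\leq\pi\}}(1-\cos(\eta z))\,\rho(dz)\geq2\int_{\{|vz|\leq1\}}(vz)^2\,\rho(dz)\geq2c|v|^{2-\alpha}=C|\eta|^{2-\alpha}.$$
Since $e^{-sQ}\geq e^{-tQ}$ for $s\in[0,t]$, one has $|e^{-sQ}\theta|\geq\pi$ whenever $|\theta|\geq\pi e^{tQ}$, and then $\int_0^t\mathfrak{Re}\,\psi(e^{-sQ}\theta)\,ds\geq C|\theta|^{2-\alpha}\int_0^te^{-(2-\alpha)sQ}\,ds=c_t'|\theta|^{2-\alpha}$ with $c_t'>0$; hence $|\varphi_{P(t,x,\cdot)}(\theta)|\leq e^{-c_t'|\theta|^{2-\alpha}}$ for all $|\theta|\geq\pi e^{tQ}$. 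Splitting $\int_R|\theta|^n|\varphi_{P(t,x,\cdot)}(\theta)|\,d\theta$ over $\{|\theta|<\pi e^{tQ}\}$, where the bound $|\varphi|\leq1$ makes the contribution finite, and over $\{|\theta|\geq\pi e^{tQ}\}$, where the stretched-exponential bound with exponent $2-\alpha>0$ absorbs any power of $|\theta|$, yields finiteness for every $n$, proving (b).

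The routine ingredients are Fubini, differentiation under the integral, and dominated convergence; the one delicate point is the passage from the Blumenthal--Getoor-type lower bound (\ref{1}) on the truncated second moment of $\rho$ to the polynomial lower bound $\mathfrak{Re}\,\psi(\eta)\geq C|\eta|^{2-\alpha}$ — this forces one to align the truncation level $|vz|\leq1$ in (\ref{1}) with the range $|x|\leq\pi$ in which the cosine inequality holds (handled above by the rescaling $v=\eta/\pi$) and to remember that the estimate is valid only for large $|\eta|$, which is precisely why the final integral must be split into a bounded frequency band and a high-frequency tail. This argument is essentially the one of Masuda \cite{ma:04}, so the write-up can be kept brief.
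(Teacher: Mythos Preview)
Your argument is correct and is precisely the Fourier--inversion approach of Masuda \cite{ma:04} that the paper invokes. The paper gives no independent proof of this lemma---it simply attributes the result to \cite{ma:04}---so your write-up supplies exactly the details the paper omits.
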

 For L\'{e}vy process $Z$, because it has stationary independent
 increments, $p(t,x,y)=p(t,0,y-x)$ for every $t\in R_+$ and $x\in
 R$. Unfortunately, there is not this property for $X$, but there is
 a similar property as following:

\begin{lemma}\quad Let $p(t,x,y)$ be the density of
$P(t,x,\cdot)$, then
\begin{eqnarray}
p(t,x,y)=p(t,0,y-xe^{-tQ}).\label{4.1}
\end{eqnarray}
Moreover, $p(t,x,y)$ is continuous about $x$ and tends to 0 as
$x\rightarrow \infty$.
\end{lemma}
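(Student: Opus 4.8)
The plan is to read off the translation structure of the O--U transition law from the explicit characteristic function in Proposition 2.1, and then transport the regularity of $p(t,0,\cdot)$ to every starting point.

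\emph{Step 1: the translation identity.} By Proposition 2.1,
$$
\varphi_{P(t,x,\cdot)}(\theta)=\exp\Big\{i x e^{-tQ}\theta-\int_0^t\psi(e^{-sQ}\theta)\,ds\Big\}=e^{i x e^{-tQ}\theta}\,\varphi_{P(t,0,\cdot)}(\theta),
$$
so $P(t,x,\cdot)$ is the image of $P(t,0,\cdot)$ under the shift $y\mapsto y+xe^{-tQ}$. (The same fact is visible directly from (1.1): the stochastic integral $\int_0^t e^{(s-t)Q}\,dZ_s$ does not depend on $x$, so under $P_x$ the law of $X_t$ is the law of $X_t$ under $P_0$ translated by $e^{-tQ}x$.) Passing to densities gives $p(t,x,y)=p(t,0,y-xe^{-tQ})$, the first assertion.

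\emph{Step 2: continuity in $x$.} We are under the standing hypotheses of this section (those of Theorem 3.1), so Lemma 4.1 applies and $z\mapsto p(t,0,z)$ is a bounded $C^\infty$, hence continuous, function for each fixed $t$. Since $x\mapsto y-xe^{-tQ}$ is continuous for fixed $t,y$, the composition $x\mapsto p(t,x,y)=p(t,0,y-xe^{-tQ})$ is continuous.

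\emph{Step 3: decay as $x\to\infty$.} Since $Q\in M_+(R)$ we have $e^{-tQ}>0$, so $y-xe^{-tQ}\to\mp\infty$ as $x\to\pm\infty$; it thus suffices to show $p(t,0,z)\to 0$ as $|z|\to\infty$. From (2.2) (or from the bound on $|\varphi_{X_s-X_t}|$ derived at the start of Section 3) one checks that under either hypothesis of Theorem 3.1 the function $\varphi_{P(t,0,\cdot)}$ is integrable on $R$: when $\sigma>0$ it is dominated by a Gaussian, and under (b) by $\exp\{-c'|\theta|^{2-\alpha}\}$ for large $|\theta|$. Hence the Fourier inversion formula represents $p(t,0,z)=\tfrac{1}{2\pi}\int_R e^{-iz\theta}\varphi_{P(t,0,\cdot)}(\theta)\,d\theta$, and the Riemann--Lebesgue lemma gives $p(t,0,z)\to 0$ as $|z|\to\infty$. (Alternatively, $p(t,0,\cdot)\in C_b^\infty$ has bounded first derivative, hence is uniformly continuous, and a uniformly continuous $L^1$ function on $R$ vanishes at infinity.) This completes the proof.

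The only step needing genuine care is Step 3: boundedness and integrability of $p(t,0,\cdot)$ alone would not force decay at infinity, so one must invoke the extra smoothness supplied by Lemma 4.1, or equivalently the Gaussian / sub-exponential control of the characteristic function. Steps 1 and 2 are routine manipulations with the formula of Proposition 2.1.
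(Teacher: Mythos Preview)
Your proof is correct and follows essentially the same route as the paper: the translation identity comes from the characteristic function formula (2.2) (the paper just says ``the inversion formula and (2.2)''), and the continuity and decay in $x$ are read off from the $C_b^\infty$ regularity of $p(t,0,\cdot)$ supplied by Lemma 4.1 together with the fact that it is a probability density. The paper's proof is a two-line sketch; your Steps~1--3 simply fill in the details it omits, and your Step~3 makes precise what the paper hides behind ``the property of probability density function.''
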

\begin{proof}\quad
This is immediate from   the inversion formula and (2.2). The last
assertion stems from Lemma 4.1 and the property of probability
density function.
\end{proof}

\begin{theorem}\quad For a.e. $x\in R$
$$
\lim_{\epsilon\rightarrow
0+}\frac{1}{2\epsilon}\int_0^tI_{\{|X_s-x|<\epsilon\}}ds=L(x,t)
$$
uniformly on compact intervals of time, in $L^2(P)$. As a
consequence, the process $L(x,\cdot)$ is continuous a.s.
\end{theorem}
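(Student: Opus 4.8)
The plan is to establish $L^2$-convergence of the approximate local times $L_\epsilon(x,t):=\frac{1}{2\epsilon}\int_0^t I_{\{|X_s-x|<\epsilon\}}\,ds$ by showing that, for a.e.\ $x$, the family $\{L_\epsilon(x,t)\}_{\epsilon>0}$ is Cauchy in $L^2(P)$, uniformly for $t$ in a compact interval $[0,T]$. The standard route (following Berman and the treatment in \cite{Bertoin:96}) is to pass to Fourier variables: writing $I_{\{|y|<\epsilon\}}$ as an inverse Fourier integral and using that its Fourier transform is $\frac{\sin(\epsilon\theta)}{\epsilon\theta}\to 1$ as $\epsilon\to 0$, one gets
\begin{eqnarray*}
E\big[(L_\epsilon(x,t)-L_{\epsilon'}(x,t))^2\big]
=\frac{1}{(2\pi)^2}\int_R\int_R e^{-i(\theta+\theta')x}
\Big(\widehat{K_\epsilon}-\widehat{K_{\epsilon'}}\Big)(\theta)\overline{(\cdots)}(\theta')\,
E\big[\,\widehat{\mu_t}(\theta)\overline{\widehat{\mu_t}(\theta')}\,\big]\,d\theta\,d\theta',
\end{eqnarray*}
so that after integrating the bound over $x$ (which kills one of the frequency integrals via $\int e^{-i(\theta+\theta')x}dx$), the whole thing is controlled by a double integral of $E[|\widehat{\mu_t}(\theta)|^2]$ against kernels that tend to $1$. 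Hence the key analytic input I would isolate first is the finiteness of $\int_R E[|\widehat{\mu_t}(\theta)|^2]\,d\theta$, which is exactly what the estimates (3.2), (3.6)--(3.11) of Theorem 3.1 deliver (after removing the harmless $e^{-2Qt}$ weight on a finite time window $[0,T]$, which only changes constants). By dominated convergence — the integrand is dominated by the integrable envelope and tends to $0$ pointwise — the Cauchy property in $L^2$ follows for a.e.\ $x$, and the limit is the occupation density $L(x,t)$.

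Next I would upgrade pointwise-in-$t$ convergence to uniformity on $[0,T]$. Here I would use the monotonicity of $t\mapsto \int_0^t I_{\{|X_s-x|<\epsilon\}}\,ds$ together with a Kolmogorov-type continuity estimate: bounding $E[(L_\epsilon(x,t)-L_\epsilon(x,s))^4]$ or $E[(L(x,t)-L(x,s))^2]$ by a constant times $|t-s|^{1+\delta}$ using the same Fourier representation restricted to the time strip $[s,t]$, so that the occupation densities $L(x,\cdot)$ and their approximants admit continuous modifications with a uniform modulus of continuity in $\epsilon$. Combining pointwise convergence on a countable dense set of $t$ with equicontinuity then yields uniform convergence on compacts, and in particular continuity of $t\mapsto L(x,t)$ a.s., which is the stated consequence. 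Lemma 4.1 (existence of a bounded smooth transition density) and Lemma 4.2 (the translation-type identity $p(t,x,y)=p(t,0,y-xe^{-tQ})$ and its continuity in $x$) enter here to guarantee that the relevant expectations $E_x[\cdots]=\int p(\cdots)$ are finite and vary measurably/continuously in the level $x$, so that the "a.e.\ $x$" exceptional set is genuinely Lebesgue-null rather than artifact of non-integrability.

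The main obstacle I anticipate is the uniform-in-time control: getting a clean $|t-s|^{1+\delta}$ moment bound requires re-running the Fourier estimate of Theorem 3.1 but with the occupation measure of the sub-interval $[s,t]$ in place of $[0,\infty)$, and the decay in $\theta$ there comes only from the factor $\exp\{-\int_0^{s-t}\mathfrak{Re}\,\psi(e^{-uQ}\theta)\,du\}$, which for small $t-s$ gives a weaker-than-polynomial gain; one has to track carefully how the constant in (3.7) (with the lower bound $c|v|^{2-\alpha}$, resp.\ the Gaussian term when $\sigma>0$) degrades as the time window shrinks. I expect case (a), $\sigma>0$, to be straightforward (genuine Gaussian smoothing on every sub-interval), while case (b) needs the assumption (3.1) to hold on the rescaled frequencies $e^{-uQ}\theta$ throughout a short interval, which is where one uses that $|e^{-uQ}\theta|\geq 1$ persists for $u$ small when $|\theta|$ is large. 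The rest — Fubini, Plancherel, dominated convergence — is routine given the Theorem 3.1 machinery.
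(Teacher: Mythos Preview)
Your approach is correct in spirit but takes a genuinely different route from the paper. The paper (following Bertoin \cite{Bertoin:96}) does \emph{not} prove a Cauchy criterion for $L_\epsilon(x,t)$ directly, nor does it invoke a Kolmogorov-type moment bound $E[(L(x,t)-L(x,s))^p]\le C|t-s|^{1+\delta}$. Instead it introduces an independent exponential time $\tau$ (which is exactly what the weight $e^{-2Qs}$ in the measure $\mu$ of Theorem~3.1 is tailored for), observes that $\frac{1}{2\epsilon}\int_0^\tau I_{\{|X_s-y|<\epsilon\}}\,ds\to L(y,\tau)$ in $L^2(P)$ for a.e.\ $y$, and then considers the martingale $M_t^\epsilon=E\big[\frac{1}{2\epsilon}\int_0^\tau I_{\{|X_s-y|<\epsilon\}}\,ds\,\big|\,\mathfrak{F}_t'\big]$. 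Doob's maximal inequality immediately upgrades the terminal $L^2$-convergence to convergence uniform in $t$. By the Markov property and the lack of memory of $\tau$ one has $M_t^\epsilon=\frac{1}{2\epsilon}\int_0^{t\wedge\tau} I_{\{|X_s-y|<\epsilon\}}\,ds+I_{\{t<\tau\}}f_\epsilon(X_t)$ with $f_\epsilon(x)=E_x\big[\frac{1}{2\epsilon}\int_0^\tau I_{\{|X_s-y|<\epsilon\}}\,ds\big]$, and the remaining task is to show that the compensator term $f_\epsilon(X_t)$ converges uniformly; this is exactly where Lemma~4.2 (the identity $p(t,x,y)=p(t,0,y-xe^{-tQ})$ and continuity in $x$) is used.

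What the two approaches buy: the martingale/Doob route sidesteps entirely the obstacle you correctly flag, namely obtaining $E[(L_\epsilon(x,t)-L_\epsilon(x,s))^p]\le C|t-s|^{1+\delta}$ uniformly in $\epsilon$. For second moments one typically only gets $O(|t-s|)$, which is not enough for Kolmogorov, so you would be forced to higher even moments and the corresponding $2k$-fold frequency integrals; this is feasible under condition (b) but laborious, and the degradation you mention as the time window shrinks is a real technical nuisance. The paper's argument replaces all of that with a single application of Doob's inequality, at the modest cost of having to control the extra term $f_\epsilon(X_t)$, for which Lemmas~4.1 and~4.2 are precisely designed. Your use of those lemmas (measurability/continuity in the level $x$) is tangential to their actual role in the paper.
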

 \begin{proof}\quad By Theorem 3.1, there exists a local time
 $L(x,\tau)$ in $L^2(dy\otimes dP)$, where $\tau$ is an independent
 random time with an exponential distribution of parameter 1.
 Mimicking the argument of Bertoin  \cite{Bertoin:96}, for $a.e.
 y\in R$, the following convergence holds in $L^2(P)$:
\begin{eqnarray}
\lim_{\epsilon\rightarrow 0+}\frac{1}{2\epsilon}\int_0^\tau
I_{\{|X_s-y|<\epsilon\}}ds=\lim_{\epsilon\rightarrow
0+}\frac{1}{2\epsilon}\int_{y-\epsilon}^{y+\epsilon}L(v,\tau)dv=L(y,\tau).\label{4.2}
\end{eqnarray}
Pick any $y$ for which (\ref{4.2}) is fulfilled and for every
$\epsilon>0$, consider the martingale
\begin{eqnarray}
M_t^\epsilon=E(\frac{1}{2\epsilon}\int_0^\tau
I_{\{|X_s-y|<\epsilon\}}ds|\mathfrak{F'_t}), \mspace{20mu}  t\leq 0,
\label{4.3}
\end{eqnarray}
where $\mathfrak{F'_t}=\mathfrak{F_t}\vee\sigma(t\wedge\tau)$. By
(\ref{4.2}), and Doob's maximal inequality, $M_t^\epsilon$ converges
as $\epsilon\rightarrow 0+$, uniformly on $t\in [0,\infty)$,  in
$L^2(P)$.

By the Markov property and the lack of memory of the exponential
law, we have $a.s.$
\begin{eqnarray}
M_t^\epsilon= \frac{1}{2\epsilon}\int_0^{t\wedge\tau}
I_{\{|X_s-y|<\epsilon\}}ds +I_{\{t<\tau\}}f_\epsilon(X_t),
\label{4.4}
\end{eqnarray}
where
$$
f_\epsilon(x)=E_x(\frac{1}{2\epsilon}\int_0^\tau
I_{\{|X_s-y|<\epsilon\}}ds).
$$

Now, what we have to do is proving $f_\epsilon(X_t)$ convergence
uniformly on $t\in [0,\infty)$. Applying Fubini's theorem,

$$
f_\epsilon(x)= \frac{1}{2\epsilon}\int_0^\infty
e^{-t}P_x(|X_t-y|<\epsilon)dt).
$$
Applying  Lemma 4.2, we get our assertion.
\end{proof}

By  Theorem 4.1, $a.e.$ $x\in R$, $L(x,t)$ is continuous additive
function about $t\in R+$,   $L(x,t)$ also is the Blumenthal-Getoor
local time. Hence we have the following corollary:

\begin{corollary}
\quad Under the conditions of Theorem 3.1, $a.e.$  x in the range of
$X$  are regular.
\end{corollary}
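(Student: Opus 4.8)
The plan is to recognise that, by Theorems 3.1 and 4.1, for Lebesgue-a.e.\ $x$ the occupation-density local time $t\mapsto L(x,t)$ is a continuous additive functional of $X$ carried by the single point $\{x\}$, and then to invoke the classical Blumenthal-Getoor characterisation of regular points: $x$ is regular (for itself) if and only if $X$ admits a nonzero continuous additive functional supported by $\{x\}$, equivalently if and only if the Blumenthal-Getoor local time at $x$ exists --- this is precisely the fact recalled after Definition 2.3, see \cite{geman:80}, \cite{Kallenberg:2001}. Recall that $x$ is \emph{regular} when $P_x(T_x=0)=1$ with $T_x=\inf\{t>0:X_t=x\}$, a dichotomy guaranteed by Blumenthal's zero-one law since $X$ is Feller.

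The first step I would carry out is to verify that, for a.e.\ $x$, the object $L(x,\cdot)$ really is a continuous additive functional of $X$ supported by $\{x\}$. Continuity, and the replacement of $\limsup$ by $\lim$ in the definition, are exactly the content of Theorem 4.1. Additivity, $L(x,t+s)=L(x,t)+L(x,s)\circ\theta_t$ a.s., I would deduce from the evident additivity of the occupation measures, $\mu_{t+s}=\mu_t+\mu_s\circ\theta_t$, together with uniqueness of Radon-Nikodym densities. For the support property --- that $dL(x,\cdot)$ charges only the set of times $\{s:X_s=x\}$, so that $L(x,\cdot)$ is a continuous additive functional of the single point $x$ in the sense used after Definition 2.3 --- I would appeal to the occupation-density theory of \cite{geman:80}, which applies once the density exists and is continuous in $t$ for a.e.\ $x$, as provided by Theorems 3.1 and 4.1.

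Next I would restrict to the range of $X$. For $x$ with $L(x,\infty)=0$ the functional $L(x,\cdot)$ is trivial and carries no information; but by the occupation-density identity $\int_0^t g(X_s)\,ds=\int_R g(v)L(v,t)\,dv$ the set $\{x:L(x,\infty)=0\}$ is $\mu_\infty$-null, so $\{x:L(x,\infty)>0\}$ is a version of the support of the absolutely continuous occupation measure, i.e.\ it is exactly the (Lebesgue-almost-everywhere) range of $X$. For such an $x$, $L(x,\cdot)$ is a nonzero continuous additive functional supported by $\{x\}$, hence by the Blumenthal-Getoor criterion $x$ is regular, which is the assertion of the corollary.

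The hard part will be the support property in the first step: showing that the occupation-density local time only grows at times when $X_s=x$, so that it genuinely qualifies as a continuous additive functional of the single point $x$. This is precisely the feature that breaks down at atypical states --- witness Remark 3.2 and the Poisson example there, where a Blumenthal-Getoor local time exists at $0$ yet no occupation density does --- so one must lean on the ``for a.e.\ $x$'' qualifier together with the standard occupation-density machinery (joint measurability in $(x,t)$ and the associated regularity results of \cite{geman:80}). A secondary, more routine subtlety is making ``a.e.\ $x$ in the range'' precise, i.e.\ transferring the Lebesgue-a.e.\ statement on $R$ to the random set $\{X_t:t\ge 0\}$; this again rests on the absolute continuity of the occupation measure furnished by Theorem 3.1.
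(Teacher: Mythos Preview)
Your proposal is correct and follows exactly the paper's own route: the corollary is stated immediately after Theorem 4.1 with the single justifying sentence that for a.e.\ $x$ the occupation-density local time $L(x,\cdot)$ is a continuous additive functional and hence coincides with the Blumenthal--Getoor local time, from which regularity follows. You have simply fleshed out the details (additivity, support on $\{x\}$, non-triviality on the range) that the paper leaves implicit.
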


Recalling Proposition 2.2, assume that
\begin{eqnarray}
\int_{|z|>1}\log|z|\rho(dz)< \infty,\label{4.5}
\end{eqnarray}
 then there
exists a limit distribution $F$ such that
$$
P(t,x,A)\rightarrow F(A) \mspace{20mu}\mbox{as}\ \  t\rightarrow
\infty
$$
for any $x\in R$ and Borol set $A$. This $F$ is the unique invariant
distribution of $X$. Hence under (\ref{4.5}), $X$ is ergodic. By the
ergodic theorem, we have
\begin{eqnarray}
\lim_{t\rightarrow
\infty}\frac{1}{t}\int_0^tI_{\{|X_s-x|<\epsilon\}}ds=\mu_F(B(x,\epsilon
)), \qquad\mbox{in}\qquad L^2(P).\label{4.6}
\end{eqnarray}

If the conditions of Theorem 3.1 is holding, $F$ has a density $f$
by Lemma 4.1, hence,

\begin{eqnarray}
\lim_{\epsilon\rightarrow 0}\lim_{t\rightarrow
\infty}\frac{1}{2t\epsilon}\int_0^tI_{\{|X_s-x|<\epsilon\}}ds=\lim_{\epsilon
\rightarrow 0}\frac{\mu_F(B(x,\epsilon
))}{2\epsilon}=f(x).\label{4.6}
\end{eqnarray}

On the other hand, by Theorem 4.1,

\begin{eqnarray}
\lim_{t\rightarrow \infty}\lim_{\epsilon\rightarrow
0}\frac{1}{2t\epsilon}\int_0^tI_{\{|X_s-x|<\epsilon\}}ds=\lim_{t
\rightarrow \infty}\frac{L(x,t)}{t}.\label{4.7}
\end{eqnarray}
We can get
$$
\lim_{t \rightarrow \infty}\frac{L(x,t)}{t}=f(x),
\qquad\mbox{in}\qquad L^2(P),
$$
if the limits in (\ref{4.6}) can commute. But this is obvious,
because $$ \lim_{\epsilon\rightarrow
0+}\frac{1}{2\epsilon}\int_0^tI_{\{|X_s-x|<\epsilon\}}ds=L(x,t) $$
uniformly on $[0,t]$ for any $t\in R^+$. More precisely, we have the
following:
\begin{theorem}
Assume that the conditions of Theorem 3.1 and (\ref{4.5}) hold true,
then $a.e.$ $x$ in the range of $X$,

$$
\lim_{t \rightarrow \infty}\frac{L(x,t)}{t}=f(x),
\qquad\mbox{in}\qquad L^2(P).
$$

\end{theorem}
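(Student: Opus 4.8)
The plan is to realise $f(x)$ as an iterated limit in $L^2(P)$ of the normalised occupation quantities $g_\epsilon(t):=\frac{1}{2t\epsilon}\int_0^t I_{\{|X_s-x|<\epsilon\}}\,ds=\frac{\mu_t(B(x,\epsilon))}{2t\epsilon}$, and then to interchange the limits $t\to\infty$ and $\epsilon\to0$.

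First I would evaluate the two iterated limits. For fixed $\epsilon>0$ the function $I_{B(x,\epsilon)}$ is bounded and measurable, so the ergodic theorem for $X$ — available under (\ref{4.5}) by Proposition 2.2 — gives $g_\epsilon(t)\to\frac{1}{2\epsilon}\mu_F(B(x,\epsilon))$ in $L^2(P)$ as $t\to\infty$; since under the hypotheses of Theorem 3.1 the invariant law $F$ has a density $f$ (cf.\ Lemma 4.1 and the integrability of $\varphi_F$ that the estimates on $\mathfrak{Re}\,\psi$ provide), Lebesgue's differentiation theorem gives $\frac{1}{2\epsilon}\mu_F(B(x,\epsilon))\to f(x)$ for every Lebesgue point $x$ of $f$, hence for a.e.\ $x$. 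Thus $\lim_{\epsilon\to0}\lim_{t\to\infty}g_\epsilon(t)=f(x)$. On the other hand Theorem 4.1 gives, for a.e.\ $x$, $\frac{1}{2\epsilon}\mu_t(B(x,\epsilon))\to L(x,t)$ in $L^2(P)$, so $\lim_{\epsilon\to0}g_\epsilon(t)=L(x,t)/t=:g_0(t)$ for each $t\ge1$, whence $\lim_{t\to\infty}\lim_{\epsilon\to0}g_\epsilon(t)=\lim_{t\to\infty}L(x,t)/t$ once the latter is known to exist. Fixing one full-measure set of $x$ on which all of this holds, the conclusion follows as soon as the two iterated limits are shown to coincide.

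To interchange the limits I would run a Moore--Osgood argument in the Banach space $L^2(P)$: it suffices to show $\limsup_{t\to\infty}\|g_\epsilon(t)-g_0(t)\|_{L^2(P)}\to0$ as $\epsilon\to0$, for then the triangle inequality $\|g_0(t)-f(x)\|_{L^2(P)}\le\|g_0(t)-g_\epsilon(t)\|_{L^2(P)}+\|g_\epsilon(t)-\tfrac{1}{2\epsilon}\mu_F(B(x,\epsilon))\|_{L^2(P)}+|\tfrac{1}{2\epsilon}\mu_F(B(x,\epsilon))-f(x)|$, on taking $\limsup_{t\to\infty}$ and then $\epsilon\to0$, forces $\lim_{t\to\infty}\|g_0(t)-f(x)\|_{L^2(P)}=0$. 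For the required estimate I would use that $D_\epsilon(t):=L(x,t)-\frac{1}{2\epsilon}\mu_t(B(x,\epsilon))$ is a continuous additive functional of $X$ (both summands are; for $L(x,\cdot)$ this is Corollary 4.1), so $D_\epsilon(n)=\sum_{k=0}^{n-1}D_\epsilon(1)\circ\theta_k$; Minkowski's inequality and the Markov property then give $\|D_\epsilon(n)\|_{L^2(P)}\le n\sup_y\|D_\epsilon(1)\|_{L^2(P_y)}$, and controlling the leftover fractional block shows $\limsup_{t\to\infty}\frac1t\|D_\epsilon(t)\|_{L^2(P)}\le\sup_y\|D_\epsilon(1)\|_{L^2(P_y)}$.

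The hard part will be exactly this last quantity: one needs $\sup_y\|D_\epsilon(1)\|_{L^2(P_y)}\to0$ as $\epsilon\to0$, i.e.\ a version of Theorem 4.1 at $t=1$ that is uniform in the starting point, whereas that theorem is proved from a single fixed start. I expect to get it by re-running the occupation-density estimates behind Theorem 3.1 while using the explicit control of the transition densities from Lemmas 4.1--4.2 — in particular that $\sup_v p(s,y,v)$ is finite and integrable in $s$ over $(0,1]$, uniformly in $y$, under either hypothesis of Theorem 3.1 — which yields $\sup_y E_y[\mu_1(B(x,\epsilon))]=O(\epsilon)$ and, via the Markov property, $\sup_y E_y[\mu_1(B(x,\epsilon))^2]=O(\epsilon^2)$, and, combined with the $L^2$-bound on $L(x,1)$, the desired vanishing. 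Everything else — the ergodic theorem, the Lebesgue differentiation step, and the Moore--Osgood interchange — is routine. One could alternatively bypass the $\epsilon$-limit entirely by applying the ratio ergodic theorem for continuous additive functionals directly to $L(x,\cdot)$ and identifying its limit as $E_F[L(x,1)]=f(x)$ for a.e.\ $x$, at the cost of the same uniform-integrability input needed to upgrade $L^1$- to $L^2$-convergence.
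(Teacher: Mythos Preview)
Your outline is exactly the paper's: compute the two iterated limits of $g_\epsilon(t)=\frac{1}{2t\epsilon}\int_0^t I_{\{|X_s-x|<\epsilon\}}\,ds$ via the ergodic theorem and Theorem~4.1 respectively, then interchange. The paper's entire justification for the interchange is the one-line remark that it is ``obvious'' because Theorem~4.1 gives $\frac{1}{2\epsilon}\int_0^t I_{\{|X_s-x|<\epsilon\}}\,ds\to L(x,t)$ uniformly on compact time intervals. That is all the paper offers; no Moore--Osgood bound, no uniformity in the starting point, nothing further.

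You go considerably beyond this, and rightly so: uniformity of the $\epsilon$-limit on compact $t$-intervals does not by itself license interchange with $t\to\infty$, so the paper's ``obvious'' is, taken literally, a gap. Your additive-functional decomposition $D_\epsilon(n)=\sum_{k<n}D_\epsilon(1)\circ\theta_k$ together with Minkowski and the Markov property correctly reduces the question to $\sup_y\|D_\epsilon(1)\|_{L^2(P_y)}\to0$, and your plan to extract this from uniform-in-$y$ transition-density bounds (Lemmas~4.1--4.2) is the natural route. What remains genuinely to be done in your sketch is to turn the moment bounds $\sup_y E_y[\mu_1(B(x,\epsilon))^k]=O(\epsilon^k)$ into an actual Cauchy estimate for $\frac{1}{2\epsilon}\mu_1(B(x,\epsilon))$ in $L^2(P_y)$ uniformly in $y$; the ingredients you list do not quite assemble into that without an extra cross-term computation (e.g.\ bounding $\sup_y E_y\big[\frac{1}{2\epsilon}\mu_1(B(x,\epsilon))\cdot\frac{1}{2\delta}\mu_1(B(x,\delta))\big]$ via the two-time density and showing it has a limit independent of the order in which $\epsilon,\delta\to0$). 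Your alternative via the ratio ergodic theorem for continuous additive functionals is also viable and arguably cleaner. Either way, you have correctly located the substantive step and supplied more argument for it than the paper does.
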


\section{The first passage cross a lever}

Let  $X=\{X_t,t\in R^+,P^{x}\} $ be   one dimensional  Markov
process of O-U type defined by $(1.1)$ taking values in $R$. Given a
real number $a>x$, let us introduce the first passage time strictly
above $a$, $ T_a=\inf \{t\geq 0: X_t> a\} $, and let
$\sigma_a=\inf\{t\geq 0: X_t=a\}$ provided that the sets in braces
is not empty,  and $ +\infty$ otherwise.

 When $Z$ is a  L\'{e}vy process with non-positive jumps,
 $ \Delta X_t=\Delta Z_t\leq 0$. If $T_a<\infty$, one gets
 immediately
 $$
X_{T_a}=a.\eqno{(3.1)}
$$
  Using martingle technique, Hadjiev \cite{Hadj:85}
 proved that
 $$
 E\exp \{-\theta T_a \}=\frac{\int_0^\infty y^{\theta/Q-1}\exp \{xy+g(y)\}dy}
 {\int_0^\infty y^{\theta/Q-1}\exp \{ay+g(y)\}dy}, \ \
 \theta>0,\eqno{(3.2)}
 $$
where
$$
g(y)=Q^{-1}\int_1^yu^{-1}\psi(iu)du,\ \ y>0.
$$

 When $Z$ is a  L\'{e}vy process with positive jumps, does the similar
 property (3.1)
 hold? We will prove that the answer is negative.

\begin{lemma}\qquad Let $X=\{X_t,t\in R^+,P^{x}\} $ be
a Markov process of O-U type defined by $(1.1)$. Then for every
$x\neq 0$ and $y\in R^d$, the potential measure of $X$ is diffuse,
that is,
$$
U(x, \{y\})=0.
$$
\end{lemma}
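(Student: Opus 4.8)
The plan is to show that the potential (Green) measure $U(x,\cdot)$ of $X$ has no atoms by analyzing its Fourier transform. Recall that for a transient-enough process the potential measure is $U(x,A)=\int_0^\infty P(t,x,A)\,dt$; more robustly, since we do not know a priori that $X$ is transient, I would work with the $\lambda$-potential (resolvent) $U^\lambda(x,A)=\int_0^\infty e^{-\lambda t}P(t,x,A)\,dt$ for $\lambda>0$, prove it is diffuse, and then pass to $\lambda\to 0$ if needed — an atom of $U(x,\cdot)$ at $y$ would force an atom of $U^\lambda(x,\cdot)$ at $y$ for every $\lambda>0$. A bounded measure on $R$ has no atoms if and only if $\frac{1}{2T}\int_{-T}^{T}|\widehat{U^\lambda}(x,\theta)|^2\,d\theta\to 0$ as $T\to\infty$ (Wiener's theorem on the quadratic sum of masses of atoms). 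So the goal reduces to an estimate on the characteristic function $\widehat{U^\lambda}(x,\theta)=\int_0^\infty e^{-\lambda t}\varphi_{P(t,x,\cdot)}(\theta)\,dt$.

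The key computational input is formula $(2.2)$: $|\varphi_{P(t,x,\cdot)}(\theta)| = \exp\{-\int_0^t \mathfrak{Re}\,\psi(e^{-sQ}\theta)\,ds\}$. First I would treat the two regimes separately, mirroring the dichotomy already used in Theorem 3.1. If $\sigma>0$, then $\mathfrak{Re}\,\psi(e^{-sQ}\theta)\ge \tfrac12\sigma^2 e^{-2sQ}\theta^2$, so $\int_0^t\mathfrak{Re}\,\psi(e^{-sQ}\theta)\,ds \ge \tfrac{\sigma^2\theta^2}{4Q}(1-e^{-2tQ})$, which grows like $\theta^2$ for $t$ bounded below; integrating in $t$ gives $|\widehat{U^\lambda}(x,\theta)|\to 0$ as $|\theta|\to\infty$, in fact fast enough that $\widehat{U^\lambda}(x,\cdot)\in L^2(R)$, whence $U^\lambda(x,\cdot)$ is absolutely continuous and a fortiori diffuse. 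If instead the jump condition \eqref{1} holds, the same bound $1-\cos u\ge 2(u/\pi)^2$ for $|u|\le\pi$ used in Theorem 3.1 gives $\mathfrak{Re}\,\psi(e^{-sQ}\theta)\ge C|e^{-sQ}\theta|^{2-\alpha}$ once $|e^{-sQ}\theta|\ge 1$, and again $\int_0^t\mathfrak{Re}\,\psi(e^{-sQ}\theta)\,ds\to\infty$ as $|\theta|\to\infty$ for fixed $t>0$; the resulting decay of $\widehat{U^\lambda}(x,\theta)$ suffices for the Wiener criterion (and typically for $L^2$ integrability as well). The hypothesis $x\neq 0$ enters because the factor $e^{ixe^{-tQ}\theta}$ in $(2.2)$ is harmless for the modulus, but one wants to be sure the construction of the potential is non-degenerate; in the degenerate O-U-type situations $x=0$ can be a holding point (as Remark 3.3 notes for the Poisson driver), which is exactly the atom one must exclude.

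The main obstacle will be handling the range $|e^{-sQ}\theta|<1$, i.e. large $s$, where neither lower bound on $\mathfrak{Re}\,\psi$ is available; there $\mathfrak{Re}\,\psi$ may be arbitrarily small and the integrand $e^{-\lambda t}$ is what keeps $\widehat{U^\lambda}$ finite but does not by itself give decay in $\theta$. The resolution is that only the contribution from the $s$-interval on which $|e^{-sQ}\theta|\ge 1$ — whose length is $\asymp Q^{-1}\log|\theta|\to\infty$ — needs to be exploited, and on it the integral of $\mathfrak{Re}\,\psi$ is already $\to\infty$; so after splitting $\int_0^t = \int_0^{s_0(\theta)}+\int_{s_0(\theta)}^t$ with $s_0(\theta)=Q^{-1}\log|\theta|$, one bounds the first piece from below and discards the second, uniformly in $t\ge 2s_0(\theta)$, and separately estimates the (exponentially small in $t$, hence integrable) tail $t<2s_0(\theta)$ trivially. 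Assembling these pieces shows $\int_{-T}^T|\widehat{U^\lambda}(x,\theta)|^2\,d\theta = o(T)$ — in fact $O(1)$ — and Wiener's theorem finishes the proof. $\square$
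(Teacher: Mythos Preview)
Your argument is sound under the hypotheses of Theorem 3.1, but the Lemma carries no such hypotheses: it is asserted for an arbitrary L\'evy driver $Z$, and is invoked in Section 5 only under the assumption $\rho(-\infty,0)=0$, with no restriction on $\sigma$ or on the small-jump behaviour of $\rho$. Your Fourier-decay estimate on $|\varphi_{P(t,x,\cdot)}(\theta)|$ genuinely needs one of (a) or (b); if $Z$ is, say, a compound Poisson process, then $\mathfrak{Re}\,\psi(\theta)\le 2\rho(R)<\infty$ uniformly in $\theta$, so $|\varphi_{P(t,x,\cdot)}(\theta)|\ge e^{-2\rho(R)t}$ does not decay as $|\theta|\to\infty$, and neither Wiener's criterion nor any $L^2$ argument can conclude. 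A structural warning sign is that your bound on $|\widehat{U^\lambda}(x,\theta)|$ depends only on the modulus $|\varphi_{P(t,x,\cdot)}(\theta)|$, which by $(2.2)$ is independent of $x$; thus your proof, if it worked, would work equally for $x=0$ --- but you yourself point out (correctly) that $x=0$ can be a holding point in the Poisson-driven case, so the conclusion is false there. The hypothesis $x\neq 0$ must therefore enter in a way your method cannot see.

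The paper's route is much more elementary and covers the general case directly: it observes that $X_t=e^{-tQ}x+\int_0^t e^{(s-t)Q}\,dZ_s$ and argues that for each fixed $y$ one has $P^x\{X_t=y\}=0$ for (Lebesgue-a.e.) $t$, whence $U(x,\{y\})=\int_0^\infty P^x\{X_t=y\}\,dt=0$. When $Z$ is not compound Poisson the law of the stochastic integral is already diffuse; when $Z$ is compound Poisson the only possible atom of $X_t$ sits at $e^{-tQ}x$, and since $x\neq 0$ this point varies strictly with $t$, so it coincides with a given $y$ for at most one value of $t$. Your Fourier machinery is a reasonable alternative under the regularity assumptions of Theorem 3.1 (and even yields absolute continuity of $U^\lambda$), but to match the Lemma as stated you must add a separate argument for the compound Poisson case --- and at that point the paper's direct computation is the natural one.
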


{\bf Proof.}\qquad Since
 $$
 X_t = e^{-tQ}x + \int^t_0e^{(s-t)Q}dZ_t
 $$
 and the distribution of $Z$ is a diffuse except when $Z$ is a compound
  Poisson process     for every
$x\neq 0  $,
$$
P^{x}\{X_t=y\}=0 ,
$$
which implies
$$
U(x,\{y\})=\int_0^\infty P^x\{X_t=y\}dt=0.
$$

\begin{theorem}\qquad Let $X=\{X_t,t\in R^+,P^{x}\} $ be a
Markov process of O-U type defined by $(1.1)$. If
$\rho(-\infty,0)=0$, we have
$$
P^x\{X_{T(a)-}<a=X_{T(a)}\}=0.
$$
\end{theorem}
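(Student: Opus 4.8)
The statement says that when the Lévy process $Z$ has only positive jumps (i.e. $\rho(-\infty,0)=0$), the O-U type process $X$ cannot creep upward across the level $a$ by a jump that lands exactly on $a$ while approaching strictly from below — more precisely, the event $\{X_{T_a-}<a=X_{T_a}\}$ is null. My plan is to reduce this to the fact, established in Lemma 5.1, that the potential (occupation) measure $U(x,\cdot)$ of $X$ is diffuse, and to combine it with a compensation-formula / excursion argument controlling the jumps of $X$.

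First I would recall that, since $\Delta X_t=\Delta Z_t$, the jumps of $X$ are governed by the same Lévy measure $\rho$, concentrated on $(0,\infty)$. On the event $\{X_{T_a-}<a=X_{T_a}\}$ there is a jump at time $T_a$ of size $\Delta X_{T_a}=a-X_{T_a-}>0$ that brings the path from some value strictly below $a$ to exactly $a$. The key idea is to use the strong Markov property at jump times together with the compensation formula for the jumps of $X$: the expected number of times $t$ at which $X_{t-}\in(-\infty,a)$ and $X_{t-}+\Delta Z_t=a$ can be written as
$$
E_x\!\int_0^\infty\!\int_{(0,\infty)}\mathbf{1}_{\{X_{t-}<a\}}\,\mathbf{1}_{\{X_{t-}+u=a\}}\,\rho(du)\,dt
= \int_0^\infty \rho(\{a-v\})\,U(x,dv),
$$
after a Fubini interchange, where $U(x,dv)$ is the potential measure of $X$ restricted to $(-\infty,a)$. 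Now invoke Lemma 5.1: $U(x,\cdot)$ is diffuse (for $x\neq 0$; the case $x=0$ needs a separate remark, e.g. running the process for an infinitesimal time and applying the Markov property, since $X_t\neq 0$ a.s.\ for $t>0$). Since $\rho$ has at most countably many atoms, the set $\{v: \rho(\{a-v\})>0\}$ is countable, hence $U(x,\cdot)$-null, and the integral vanishes. Therefore, almost surely, there is no time at which $X$ jumps from strictly below $a$ directly onto $a$.

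The last step is to connect this to the first-passage time $T_a$. If $X_{T_a-}<a=X_{T_a}$ then in particular $T_a$ is one of the jump times at which the path moves from $(-\infty,a)$ onto $\{a\}$ (note $X_{T_a-}\le a$ always, and here it is strictly $<a$, with $X_{T_a}=a$), so $T_a$ belongs to the null set identified above; hence $P^x\{X_{T_a-}<a=X_{T_a}\}=0$.

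\textbf{Main obstacle.} The delicate point is making the compensation formula rigorous for $X$ rather than for the driving Lévy process $Z$: $X$ is not a Lévy process, but it is a nice Markov (Feller) semimartingale of the form (1.1), and its jump part coincides with that of $Z$, so the predictable jump compensator is $\rho(du)\,dt$ with $X_{t-}$ appearing only through the indicator — this is standard but must be stated carefully, perhaps citing the Lévy system / Itô jump-measure formalism. A secondary subtlety is the boundary case $x=0$ excluded in Lemma 5.1, and ensuring that ``$U(x,\cdot)$ diffuse'' really does kill the contribution of the \emph{countably many} atoms of $\rho$ simultaneously (a single application of Fubini plus countable subadditivity handles this). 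Everything else is routine once the compensation identity is in place.
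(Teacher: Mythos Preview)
Your proposal is correct and follows essentially the same route as the paper: both apply the compensation formula to the jumps $\Delta X_t=\Delta Z_t$ to obtain the identity $P^x\{X_{T(a)-}<a=X_{T(a)}\}=\int_{(-\infty,a)}\rho(\{a-y\})\,U(x,dy)$, and then conclude via Lemma~5.1 (diffuseness of $U(x,\cdot)$) together with the fact that $\rho$ has at most countably many atoms. Your discussion is in fact slightly more careful than the paper's on two points---the justification of the compensation formula for $X$ rather than $Z$, and the boundary case $x=0$ excluded in Lemma~5.1---both of which the paper glosses over.
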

\vskip3mm {\bf Proof.}\qquad Let$f,g\geq 0$ be two Borel functions
with $f(a)=0$.
  Applying the compensation formula and recalling
$\Delta X_t=\Delta Z_t\geq 0$, we have
\begin{eqnarray}
& &\int_{0\leq y< a\leq z}f(y)g(z)P^x\{X_{T(a)-}\in dy,X_{T(a)}\in dz\} \nonumber\\
&=&E^x\big(f(X_{T(a)-})g(X_{T(a)})\big)\nonumber\\
&=& E^x\big(\sum_{t\geq 0}f(X_{t-})g(X_{t-}+\Delta X_t)
I_{\{a- \Delta X_t\leq X_{t-}<a\}}\big)\nonumber\\
&=&\int_0^\infty dt E^x\big(f(X_{t-})I_{\{X_{t-}<
a\}}\int^\infty_0g(X_{t-}+s)I_{\{s\geq
a-X_{t-}\}}\rho(ds)\big)\nonumber \\
&=&\int_{0\leq y< a\leq z}f(y)g(z)P^x\{X_{T(a)-}\in dy,X_{T(a)}\in dz\} \nonumber\\
 & =& \int_0^\infty
dt\int_{0\leq y< a,s\geq a-y}f(y)g(y+s)P^x\{X_{t}\in
dy\}\rho(d s)\nonumber\\
& =&\int_{0\leq y< a\leq z}f(y)g(z)U(x,dy)\rho(dz-y).\nonumber
\end{eqnarray}
Taking $f=I_{[0,a)}$ and $g=I_{\{a\}}$, we obtain
$$
P^x\{X_{T(a)-}<a=X_{T(a)}\}=\int_{[0,
a)}U(x,dy)\rho(\{a-y\}).\eqno{(3.3)}
$$
There are at most countably many $y\in [0,a)$ with $\rho(\{a-y\})>0$
and $\rho(\{0\})=0$. Moreover the potential measure is diffuse by
Lemma 3.1. Hence the right-hand side of (3.3) is zero. \vskip3mm

We deduce from Theorem 3.1 that $X$ is a.s. continuous at time
$T(a)$ on the event $X_{T(a)}=a$, so $P\{X_{T(a)-}=a|X_{T(a)}=a\}=1$
on $P\{X_{T(a)}=a\}>0$ and  $P\{X_{T(a)}>a|X_{T(a)-}<a\}=1$ on
$P\{X_{T(a)-}<a\}>0$.

It is well known from \cite{Bertoin:96} that we can write $Z_t=
at+\sigma W_t+ Z^1_s$, where $at$ is   a drift, $W_t$ is the
Brownian motion and $Z^1_s$ is a L\'{e}vy process of pure jumps
type. Hence $X$ has the following decomposition:
$$
X_t=e^{-tQ}x + a\int^t_0e^{(s-t)Q}ds+ \sigma\int^t_0e^{(s-t)Q}dW_s
+\int^t_0e^{(s-t)Q}dZ^1_s.\eqno{(3.4)}
$$
 \begin{theorem}\quad Assume that
$x=a=\sigma=0$ in (3.4), if
$$
\rho(-\infty, 0)=0 \mbox{\ and\ }\int_0^1x\rho(dx)=C<+\infty,
\eqno{(3.5)}
$$
then $X_{T(a)}>a$  a.s.
\end{theorem}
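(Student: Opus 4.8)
The plan is to start from the decomposition (3.4) specialized to $x=a=\sigma=0$, which reads $X_t=\int_0^t e^{(s-t)Q}\,dZ^1_s$ where $Z^1$ is a pure-jump L\'evy process with L\'evy measure $\rho$ supported on $(0,\infty)$ and, by (3.5), with $\int_0^1 x\,\rho(dx)<\infty$. The key structural point is that under (3.5) the small jumps are summable, so $Z^1$ has paths of bounded variation and can be written as a genuine (almost surely convergent) sum of its jumps: $Z^1_t=\sum_{u\le t}\Delta Z^1_u$, with all increments nonnegative. Consequently $X_t=\int_0^t e^{(s-t)Q}\,dZ^1_s$ is itself a nonnegative, nondecreasing-in-the-jump-part process started at $0$; more precisely, between jumps $X$ decays deterministically according to $\dot X=-QX$ toward $0$, and at each jump time $u$ it is pushed upward by $\Delta Z^1_u>0$. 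So $X$ is a piecewise-deterministic Markov process that stays in $[0,\infty)$, equals $0$ only on the (random, but a.s. nonempty at $t=0$) set before the first jump, and immediately before reaching any level $a>0$ it is on a downward deterministic segment.

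Next I would examine the first passage time $T_a=\inf\{t\ge 0:X_t>a\}$ for the fixed level $a>0$ (note $a=0$ is excluded since $a>x=0$ is assumed throughout Section 5; here "$a$" in the statement means the threshold, and the hypothesis $x=a=\sigma=0$ refers to the drift coefficient $a$ in (3.4), an unfortunate clash of notation — I will keep the threshold level fixed and positive). Since $X$ starts at $0$ and the deterministic flow only decreases $X$, the only way $X$ can exceed the level is through a jump: $T_a$ is necessarily a jump time of $Z^1$, and $X_{T_a}=X_{T_a-}+\Delta Z^1_{T_a}$ with $\Delta Z^1_{T_a}>0$. On the event $\{X_{T_a}=a\}$ we would need $X_{T_a-}<a$ (since immediately before $T_a$ the process has not yet crossed, and it is on a strictly decreasing segment so $X_{T_a-}<a$ unless $X$ had already touched $a$, which it hasn't) and then $X_{T_a-}+\Delta Z^1_{T_a}=a$ exactly. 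This is precisely the event $\{X_{T_a-}<a=X_{T_a}\}$, and Theorem 5.2 (the second "Theorem 3.1" in the excerpt) already gives $P^x\{X_{T_a-}<a=X_{T_a}\}=0$ under $\rho(-\infty,0)=0$, using the compensation formula, the countability of atoms of $\rho$, and the diffuseness of the potential measure from Lemma 5.1. Therefore $P^0\{X_{T_a}=a\}=0$, i.e.\ on the event $T_a<\infty$ we have $X_{T_a}>a$ almost surely.

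The remaining point is to argue that $T_a<\infty$ a.s., so that "$X_{T_a}>a$ a.s." is not vacuous: because $Z^1$ is a subordinator-type increasing pure-jump process with infinite lifetime and $\rho((0,\infty))>0$, the supremum of $X$ over $[0,\infty)$ is a.s.\ $+\infty$ (a single sufficiently large jump, arriving eventually, overshoots any level, and such jumps arrive infinitely often by the Borel–Cantelli argument applied to the Poissonian jump structure), hence $T_a<\infty$ a.s. Combining the two facts gives $X_{T_a}>a$ a.s., which is the claim. The main obstacle, and the step I would spend the most care on, is the reduction in the previous paragraph: one must verify carefully that $X_{T_a-}<a$ on the relevant event — i.e.\ that the process cannot creep upward to the level along a deterministic segment — which is exactly where $\sigma=0$ (no Brownian part) and $\rho(-\infty,0)=0$ (no downward compensation forcing upward drift) are used, together with the bounded-variation property from $\int_0^1 x\,\rho(dx)<\infty$ ensuring the deterministic flow is genuinely $\dot X=-QX$ with no additional positive drift term hidden in the compensator. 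Once that is in place, the conclusion is immediate from Theorem 5.2.
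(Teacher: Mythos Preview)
Your argument is correct and matches the paper's approach exactly: you show that under the hypotheses the process can only cross the level by a jump (so $X_{T_a-}<a$ on $\{T_a<\infty\}$), and then invoke the preceding theorem to rule out $\{X_{T_a-}<a=X_{T_a}\}$, forcing $X_{T_a}>a$. The paper's proof is a one-sentence version of precisely this; your extra step arguing $T_a<\infty$ a.s.\ is not addressed in the paper, and your Borel--Cantelli sketch via ``a single sufficiently large jump'' would need modification when $\rho$ has bounded support, but the core reasoning is the same.
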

\vskip3mm {\bf Proof.}\quad Note that $0<e^{(s-t)Q}<1$ for $s<t$,
$X$ gets its supremum just by jumping, that is, $P\{X_{T(a)-}<a\}>0$
by Theorem 3.1, the theorem is proved.\vskip3mm

 \vskip15mm

\end{document}